\newtheorem{theorem}{Theorem}[section]
\DeclareMathOperator*{\trace}{trace}
\DeclareMathOperator*{\argmin}{arg min}
\def \Re {\mathbb{R}}
\def \rank {{\rm rank }}
\renewcommand{\vec}{\mathbf}
\newcommand{\rmean}{\mathbf{\mu}}
\newcommand{\entmean}{\bar{\lambda}}
\newcommand{\original}{\vec{M}}
\newcommand{\recovered}{\widetilde{\vec{M}}}
\title{On Inferences from Completed Data}
\author{Jamie Haddock\thanks{Department of Mathematics, UCLA, Los Angeles, CA (\email{jhaddock@math.ucla.edu}).}
\and Denali Molitor\thanks{Department of Mathematics, UCLA, Los Angeles, CA (\email{dmolitor@math.ucla.edu}).}
\and Deanna Needell\thanks{Department of Mathematics, UCLA, Los Angeles, CA (\email{deanna@math.ucla.edu}).}
\and Sneha Sambandam\thanks{UCLA, Los Angeles, CA (\email{snehavsambandam@gmail.com}).}
\and Joy Song\thanks{Tsinghua University, Beijing, China (\email{songg15@mails.tsinghua.edu.cn}).}
\and Simon Sun\thanks{Peking University, Beijing, China (\email{1500010728@pku.edu.cn}).}}
\date{\today}
\begin{document}
  \maketitle

  \begin{abstract}
    Matrix completion has become an extremely important technique as data scientists are routinely faced with large, incomplete datasets on which they wish to perform statistical inferences.  We investigate how error introduced via matrix completion affects statistical inference.  Furthermore, we prove recovery error bounds which depend upon the matrix recovery error for several common statistical inferences.  We consider matrix recovery via nuclear norm minimization and a variant, $\ell_1$-regularized nuclear norm minimization for data with a structured sampling pattern.  Finally, we run a series of numerical experiments on synthetic data and real patient surveys from MyLymeData, which illustrate the relationship between inference recovery error and matrix recovery error.  These results indicate that exact matrix recovery is often not necessary to achieve small inference recovery error.
  \end{abstract}

  \section{Background and Motivation}\label{sec:intro}
    Real-world data is often high-dimensional and incomplete; e.g., a survey may be incomplete because respondents may skip questions or as a consequence of the structure of the survey. In recent years, much work has been invested towards determining efficient and accurate methods for \emph{data completion} \cite{candes2010power,rubin2004multiple,schafer2002missing,van2018flexible}.  Often, however, data practitioners are interested not in any particular missing entry or the completed data itself, but in performing statistical inferences on the completed data set (e.g., entrywise mean, linear regression, support vector machines) \cite{bello1995imputation}.  For this reason, we study how missing and artificially completed data introduces error into the recovery of statistical inferences.  
    
    In general, incomplete data can be modeled as a matrix with subsampled entries. In typical matrix completion results, entries are assumed to be uniformly sampled.  We expect this to be the easiest setting to analyze mathematicaly.  Unfortunately, this is invalid in many practical situations. We consider various \emph{sampling} strategies which select certain entries from a complete matrix to construct an incomplete matrix. Entries of a data matrix could be selected using \emph{uniform sampling}; that is, each entry could be sampled with equal probability as in \cite{matrixcomplete}. On the other hand, one could employ \emph{structured sampling} and select entries with probability dependent upon their value as in \cite{molitor2018matrix}. The details of these two sampling methods are given in Section I-B. Such strategies can be used to model the ways that incomplete data appears in the real world. For instance, we consider a structured sampling strategy in which entries of smaller magnitude are sampled less often which models the situation in which survey participants are more likely to skip questions that are not important to them (in which their answers may have smaller magnitude). 
    
    If the matrix to be recovered is low rank, one can accurately infer the missing entries of the data matrix using the algebraic structure of the observed entries. Indeed, Cand{\`e}s and Recht show that if the observed sample of entries is uniformly distributed and sufficiently large then one can exactly recover the matrix via nuclear norm minimization \cite{matrixcomplete}. There are many matrix completion approaches, however we focus on nuclear norm minimization (NNM) and $\ell_1$-regularized nuclear norm minimization ($\ell_1$-NNM), defined in Subsection I-A.  
    
    Data completion can also be helpful for data collection purposes; only partial information may be required for data completion to preserve the statistical properties of a dataset, allowing for reduction in the quantity of data that must be collected, stored, or transmitted. Returning to the survey example, one could ask respondents a small selection of questions from a larger set of candidate questions, predict their answers to the unasked questions using data completion, and apply inference methods to the recovered dataset. These applications are of particular interest to LymeDisease.org, an advocacy organization that collects survey data from Lyme patients through studies like MyLymeData \cite{mylymedata}. The surveys used in MyLymeData branch, presenting different sets of questions to respondents based on their previous answers. Patients may also skip questions. The resulting data matrix, in which rows correspond to patients and columns correspond to questions, is highly incomplete. Another concern of LymeDisease.org is the length of the MyLymeData surveys, since overlong surveys can cause survey fatigue and lead patients to ignore questions or answer inaccurately. Developing sound inference methods for incomplete data would allow us to sample strategically and use data completion techniques to design shorter surveys that preserve high-level information about the respondents.
    
    In this report, we study the effects of different sampling techniques on statistical inference. We derive provable error bounds for certain statistics and run numerical simulations on synthetic data as well as large-scale, incomplete survey data from MyLymeData with the goal of reducing the amount of data required from each survey respondent while preserving population-level insights.

  \subsection{Notation}\label{sec:note}
    We begin by establishing notation that will be used throughout the paper.  Recall that $[n] = \{1, 2, ..., n\}$. For $\vec{A} \in \Re^{m \times n}$, we denote the $(i,j)$ entry of $\vec{A}$ as $A_{ij}$ and the $i$th row of $\vec{A}$ as $\vec{a}_i$. The standard $\ell_q$-norm on $\Re^{n}$ is denoted $\|\cdot\|_q$ for $1 \le q \le \infty$. For $\vec{A} \in \Re^{m \times n}$, $\|\vec{A}\|_q$ is the entrywise matrix $q$-norm; i.e., the $\ell_q$-norm of the vectorization of $\vec{A}$.  The matrix nuclear norm is denoted $\|\vec{A}\|_* = \trace(\sqrt{\vec{A}^*\vec{A}})$.  
    
    We consider two sampling strategies, uniform and structured sampling.  For uniform sampling, the probability of sampling each entry is given by $p \in (0,1)$. We also investigate a structured sampling strategy in which the probability of sampling entries equal to zero is given by $p_0$, and the probability of sampling nonzero entries is given by $p_1$; we assume $p_0 < p_1$.
    
    We denote the original complete matrix by $\original$, the set of observed indices from the original matrix by $\Omega \subset [m] \times [n]$, the observed matrix by $\original_\Omega$, the recovered matrix by $\recovered$, and the fraction of entries which are observed as $\omega \in (0,1)$.  We consider two recovery methods, nuclear norm minimization (NNM) and $\ell_1$-regularized nuclear norm minimization ($\ell_1$-NNM).  The recovered matrix $\recovered$ for NNM is defined as $$\argmin_{\vec{X} \in \mathbb{R}^{m\times n}} \|\vec{X}\|_* \text{ s.t. } M_{ij} = X_{ij} \text{ for all } (i,j) \in \Omega.$$ The recovered matrix $\recovered$ for $\ell_1$-NNM is defined as $$\argmin_{\vec{X} \in \mathbb{R}^{m\times n}} \|\vec{X}\|_* + \alpha \|\vec{X}_{\Omega^C}\|_1 \text{ s.t. } M_{ij} = X_{ij} \text{ for all } (i,j) \in \Omega$$ for some regularization parameter $\alpha > 0$.  The addition of the $\ell_1$-regularization term in the objective of $\ell_1$-NNM encourages unobserved entries of the recovered matrix to be near $0$, which makes it a natural choice for recovery on an incomplete matrix generated by structured sampling \cite{molitor2018matrix}.  
    
    The inferences we consider are basic statistics.  The first inference is the \emph{entrywise mean}, defined as $\bar{\lambda}(\vec{A}) := \frac{1}{mn} \sum_{i=1}^m \sum_{j=1}^n A_{ij}$.  We additionally consider the \emph{row mean}, a row vector containing the mean value for each column or feature, which is defined as $\mu(\vec{A}) := \frac{1}{m} \sum_{i=1}^m \vec{a}_i$. 

 \subsection{Methodology}\label{sec:methods}
    To perform our experiments, we begin with a complete matrix $\original$ either artificial or extracted from real data, which we take as the ground truth. We then use either the uniform or structured sampling strategies to obtain an incomplete observed matrix, $\original_\Omega$.  The values of $p$ and $p_0, p_1$ used for uniform and structured sampling respectively are noted in each experiment. 
    We recover $\recovered$ via either NNM or $\ell_1$-NNM. For $\original_\Omega$ constructed via the uniform sampling strategy, we use NNM to recover $\recovered$ while for $\original_\Omega$ constructed via the structured sampling strategy, we use $\ell_1$-NNM to recover $\recovered$.  Here, we choose $\alpha$ optimally from among $\{0.05, 0.1, 0.2, ..., 0.5\}$ to minimize the resulting error $\|\original - \recovered\|_F$. We use the alternating direction method of multipliers (ADMM) \cite{jain2013low} to solve both NNM and $\ell_1$-NNM.  We consider the normalized matrix recovery error $E(\original,\recovered) := \|\original - \recovered\|_F/\|\original\|_F$ as an estimate of the error introduced by sampling and data completion.
    
    Finally, we compute inferences on the original matrix $\original$ and the recovered matrix $\recovered$.  We estimate the inference error between these two matrices via various measures.  We define the \emph{absolute error of the entrywise mean} as $E_{\bar{\lambda}}(\original,\recovered) := |\bar{\lambda}(\original) - \bar{\lambda}(\recovered)|$ and the \emph{normalized error of the row mean} as $E_\mu(\original,\recovered) := \|\mu(\original)-\mu(\recovered)\|_2/\|\mu(\original)\|_2$.
    
    We perform numerical experiments on both synthetic and real-world data. The real-world dataset consists of survey data from the MyLymeData patient study conducted by LymeDisease.org \cite{mylymedata}.
    For experiments on synthetic data, we generate artificial matrices as follows. To guarantee a certain rank $r$, we generate $m \times n$ scalar matrices by multiplying two matrices whose sizes are $m \times r$ and $r \times n$. The entries of each pair of matrices we generate are uniformly distributed integers within the range $[0,C]$.  For experiments on real data, we extract a complete portion of MyLymeData consisting of patient responses to questions regarding their symptoms and health history.

  \section{Experimental Results}
  
    In Figures 1, 2, and 3, we plot experimentally collected matrix and inference recovery errors on synthetic matrices; the figures differ by the choice of zero sampling probability $p_0$ for the structured sampling strategy.  We generate a $30 \times 30$ matrix with rank $5$ as described in Subsection I-B.  For various $p$ and $(p_0,p_1)$ sampling probabilities, we measure the resulting matrix recovery errors and inference recovery errors.  These results are averaged over 10 trials (each trial consists of a sample of observed entries) and plotted with the standard deviation of these errors.  Errors are plotted versus the proportion of observed entries $\omega$.  We additionally record the optimal regularization parameter $\alpha$ which resulted in the smallest matrix recovery for the given structured sampling proportion $\omega$ error in the plots in the upper left of each figure.  
    
    \begin{figure}[h]
    	\centering
    	\includegraphics[width=0.99\linewidth]{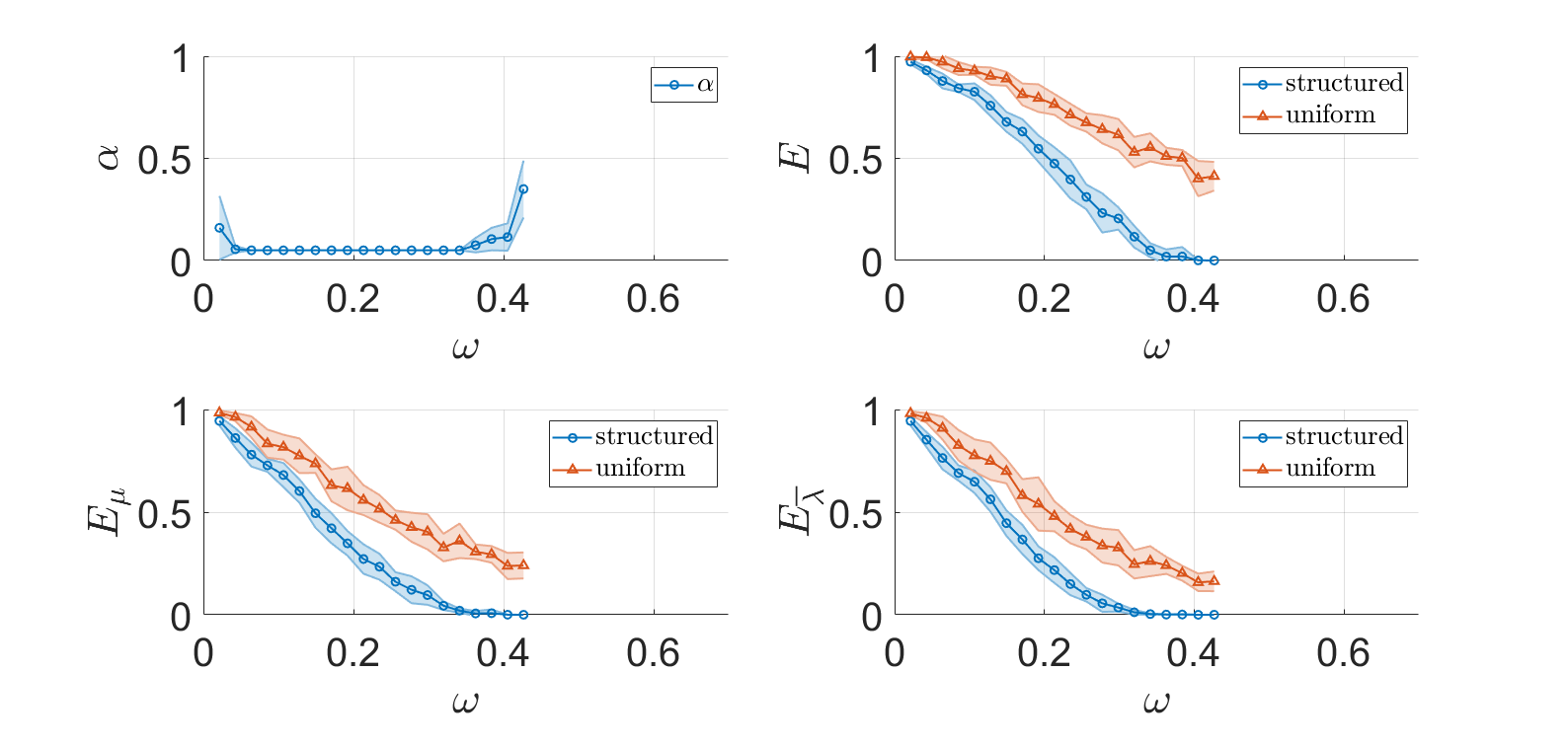}
    	\caption{Recovery errors for unif. sampling with NNM and structured sampling with $p_0=0$ (no entries equal to zero are sampled) and $\ell_1$-NNM on synthetic data.  Upper left: optimal regularization parameter $\alpha$ for observation proportions $\omega$; upper right: normalized matrix recovery errors $E$; lower left: normalized error of the row mean $E_{\rmean}$; lower right: absolute error of the entrywise mean $E_{\entmean}$.}
    	\label{fig:new_us1}
    \end{figure}

    \begin{figure}[h]
    	\centering
    	\includegraphics[width=0.99\linewidth]{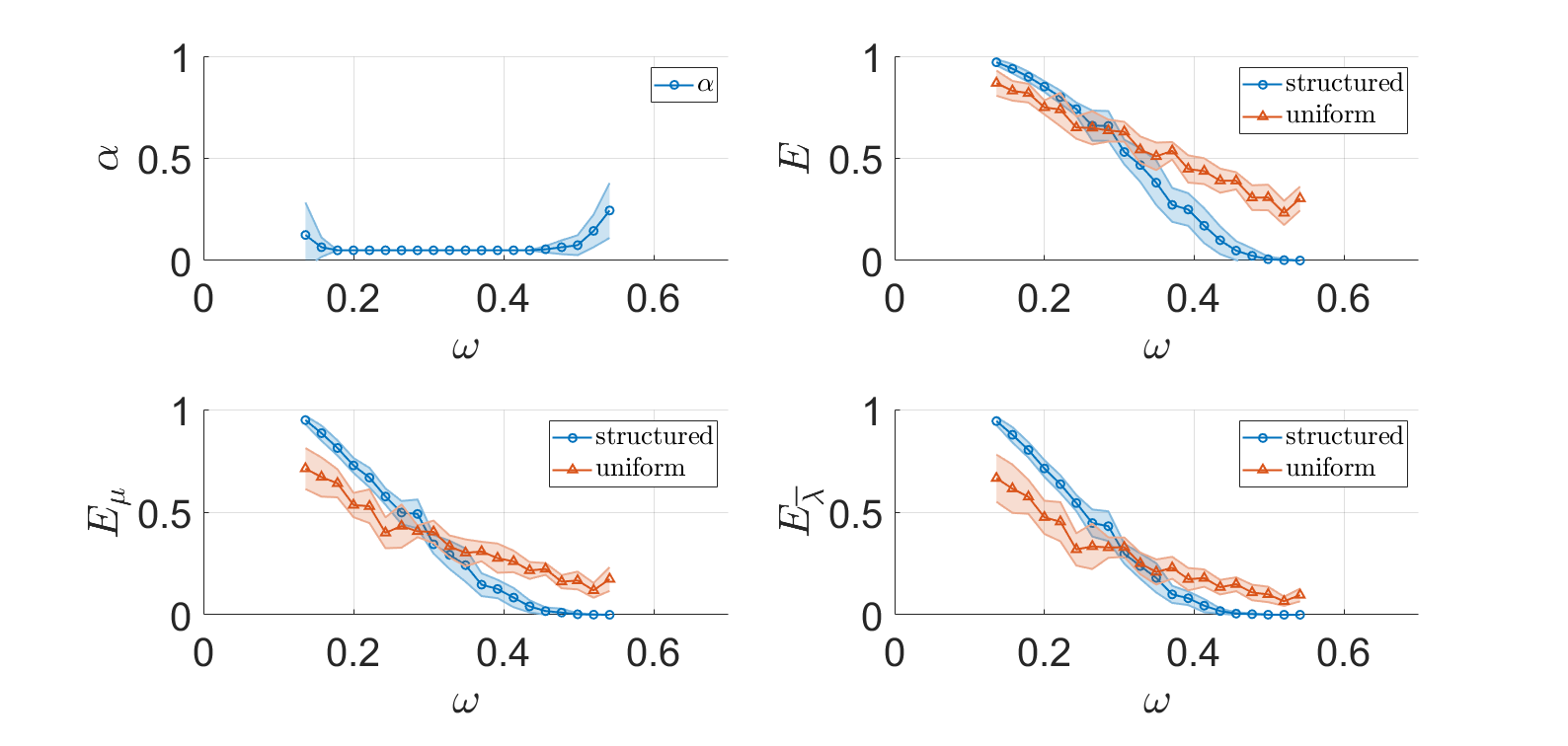}
    	\caption{Recovery errors for unif. sampling with NNM and structured sampling with $p_0=0.2$ and $\ell_1$-NNM on synthetic data.  Upper left: optimal regularization parameter $\alpha$ for observation proportions $\omega$; upper right: normalized matrix recovery errors $E$; lower left: normalized error of the row mean $E_{\rmean}$; lower right: absolute error of the entrywise mean $E_{\entmean}$.}
    	\label{fig:new_us2}
    \end{figure}
    
    \begin{figure}[h]
    	\centering
    	\includegraphics[width=0.99\linewidth]{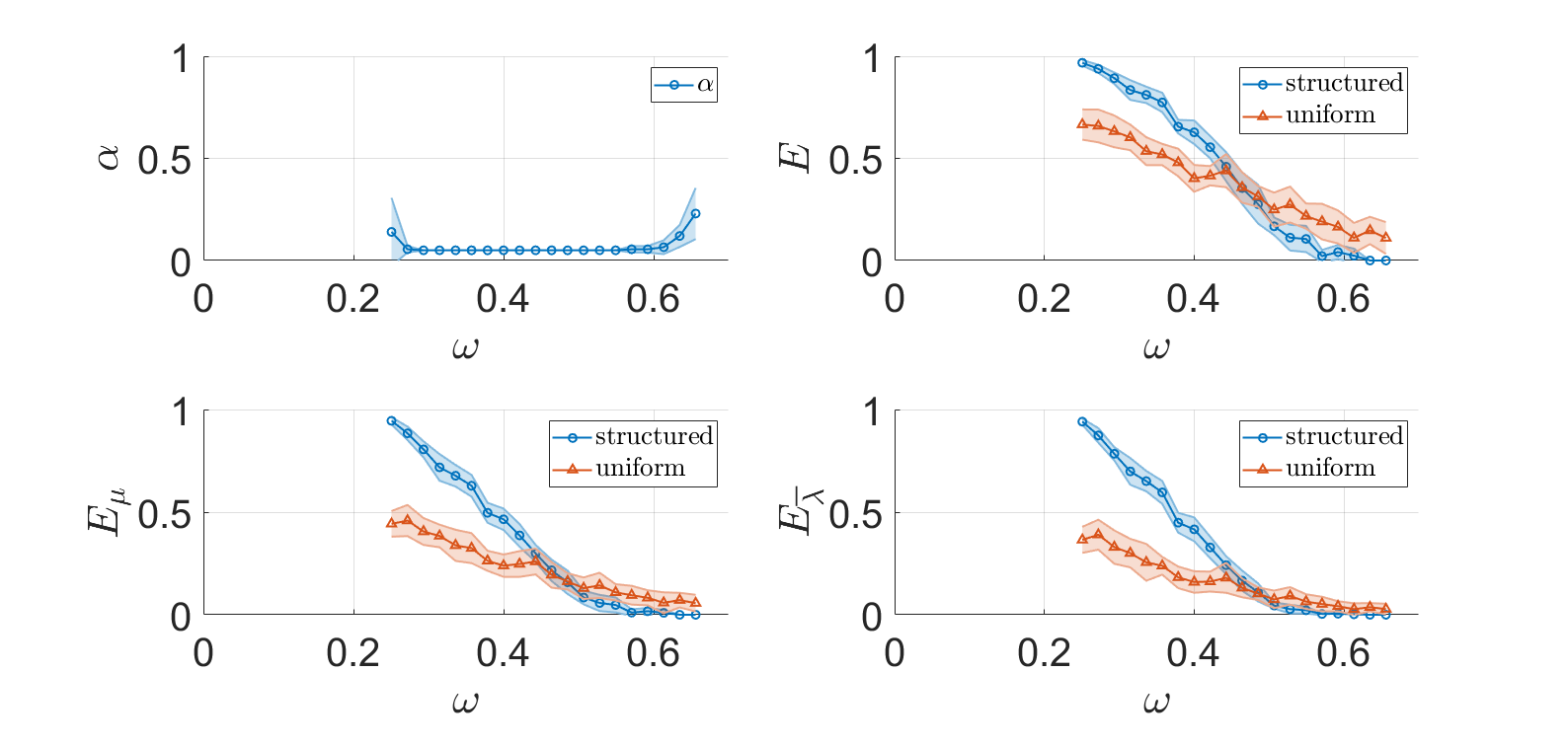}
    	\caption{Recovery errors for uniform sampling with NNM and structured sampling with $p_0=0.4$ and $\ell_1$-NNM on synthetic data.  Upper left: optimal regularization parameter $\alpha$ for observation proportions $\omega$; upper right: normalized matrix recovery errors $E$; lower left: normalized error of the row mean $E_{\rmean}$; lower right: absolute error of the entrywise mean $E_{\entmean}$.}
    	\label{fig:new_us3}
    \end{figure}

    In Figures 4, 5, and 6, we plot experimentally collected matrix and inference recovery errors on MyLymeData matrices; the figures differ by the choice of zero sampling probability $p_0$ for the structured sampling strategy. We select a complete matrix of size $30 \times 16$ by selecting the 16 questions (columns) every patient must answer and select the 30 patients with the most zero entries.
    For various $p$ and $(p_0,p_1)$ sampling probabilities, we measure the resulting matrix recovery errors and inference recovery errors.  These results are averaged over 10 trials (each trial consists of a sample of observed entries) and plotted with the standard deviation of these errors.  Errors are plotted versus the proportion of observed entries $\omega$.  We additionally record the optimal regularization parameter $\alpha$ which resulted in the smallest matrix recovery for the given structured sampling proportion $\omega$ error in the plots in the upper left of each figure.  
    
    \begin{figure}
    	\centering
    	\includegraphics[width=0.99\linewidth]{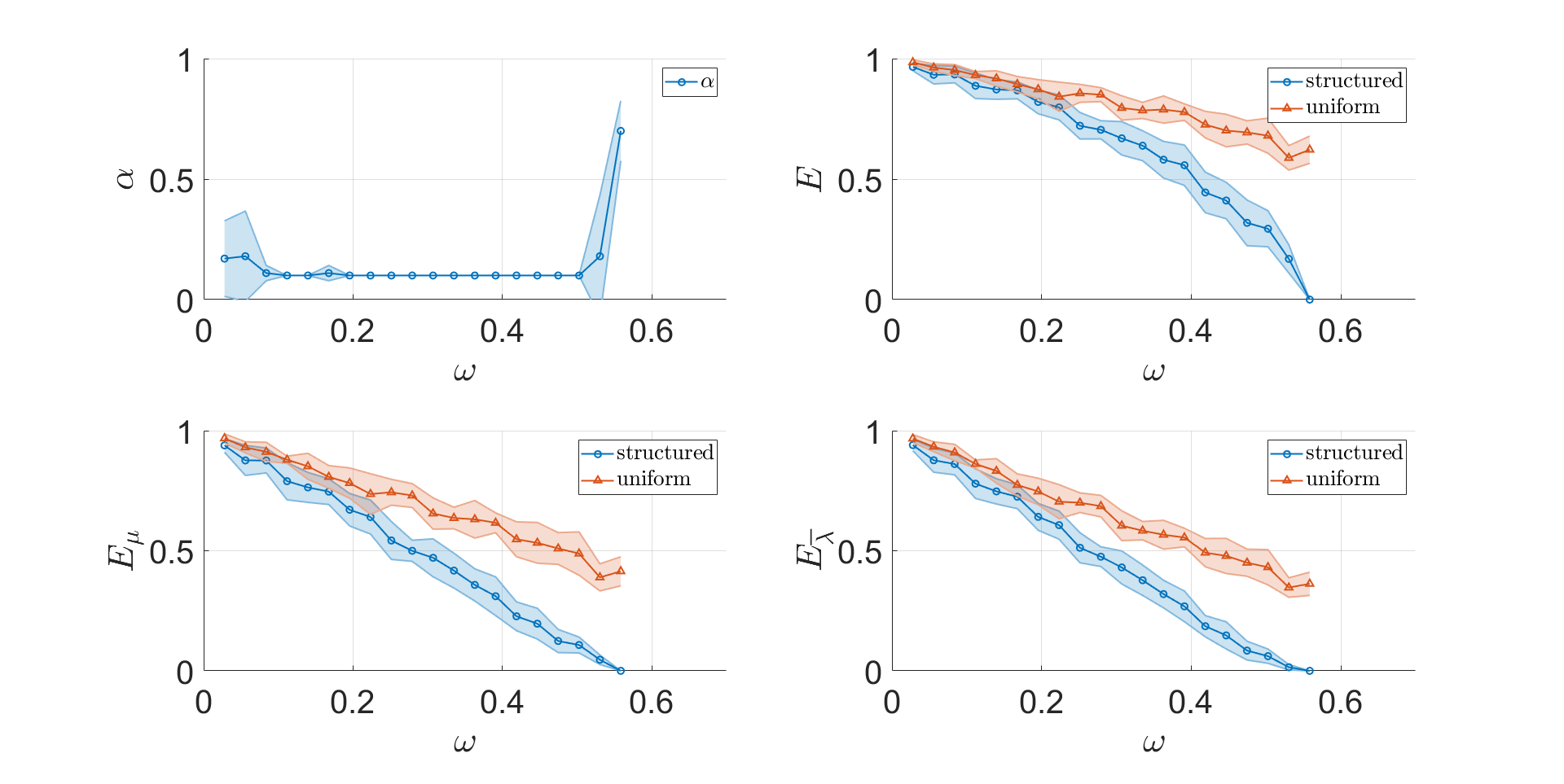}
    	\caption{Recovery errors for uniform sampling with NNM and structured sampling with $p_0 = 0$ with $\ell_1$-NNM on MyLymeData.  Upper left: optimal regularization parameter $\alpha$ for various observation proportions $\omega$; upper right: normalized matrix recovery errors $E$; lower left: normalized error of the row mean $E_{\rmean}$; lower right: absolute error of the entrywise mean $E_{\entmean}$.}
    	\label{fig:basicus1lyme}
    \end{figure}

    \begin{figure}
    	\centering
    	\includegraphics[width=0.99\linewidth]{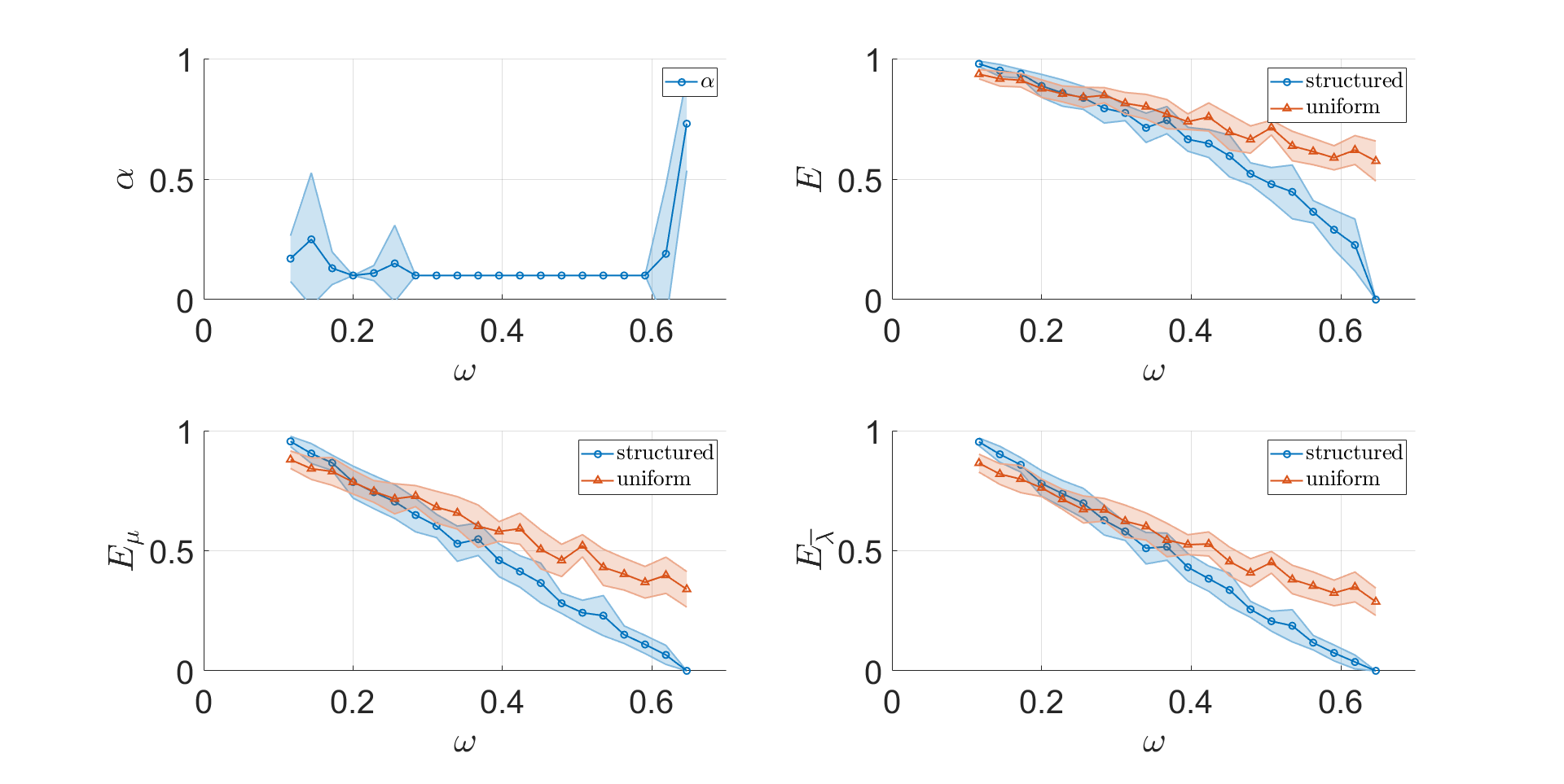}
    	\caption{Recovery errors for uniform sampling with NNM and structured sampling with $p_0 = 0.2$ with $\ell_1$-NNM on MyLymeData.  Upper left: optimal regularization parameter $\alpha$ for various observation proportions $\omega$; upper right: normalized matrix recovery errors $E$; lower left: normalized error of the row mean $E_{\rmean}$; lower right: absolute error of the entrywise mean $E_{\entmean}$.}
    	\label{fig:basicus2lyme}
    \end{figure}

    \begin{figure}
    	\centering
    	\includegraphics[width=0.99\linewidth]{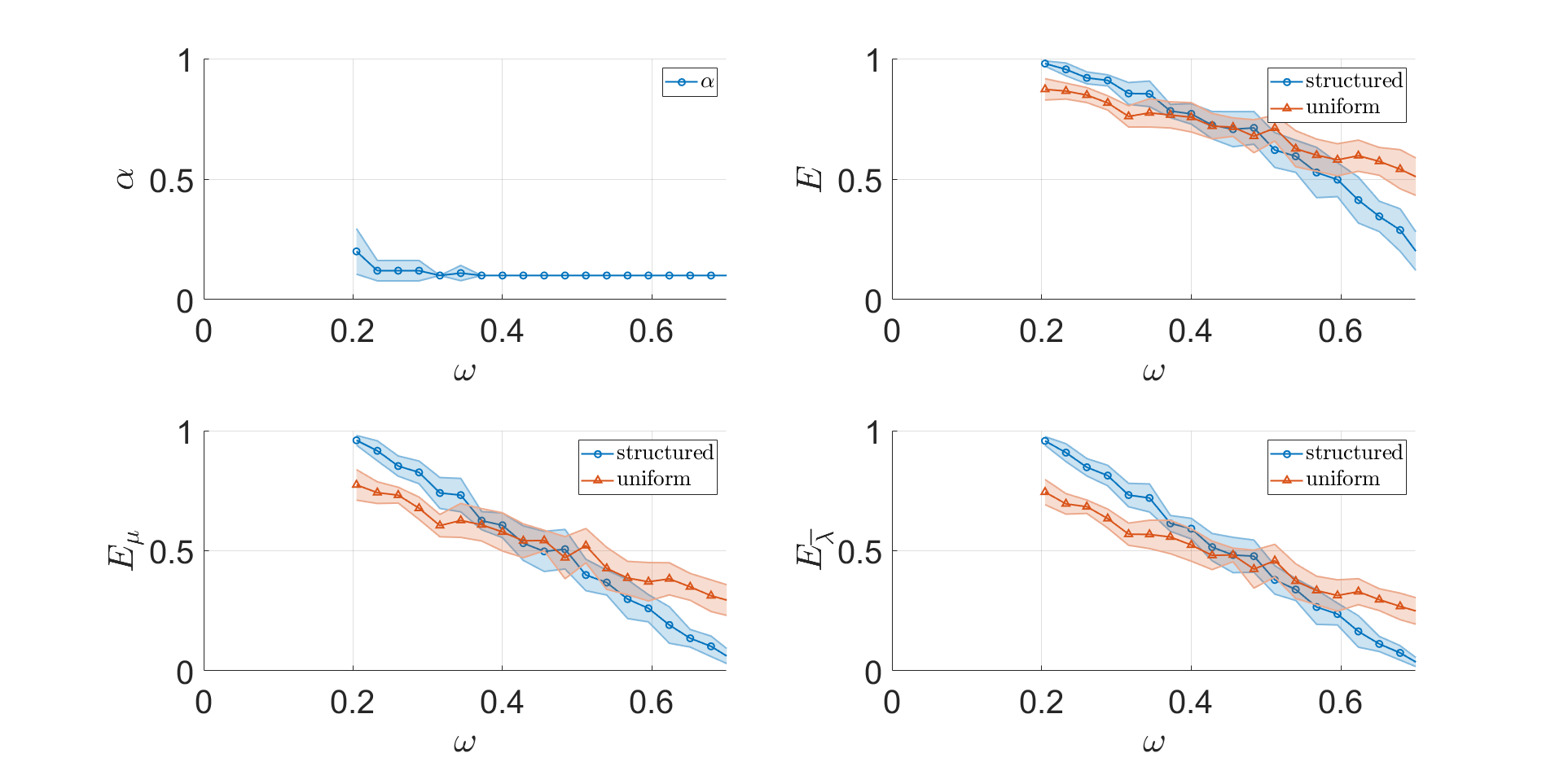}
    	\caption{Recovery errors for uniform sampling with NNM and structured sampling with $p_0 = 0.4$ with $\ell_1$-NNM on MyLymeData.  Upper left: optimal regularization parameter $\alpha$ for various observation proportions $\omega$; upper right: normalized matrix recovery errors $E$; lower left: normalized error of the row mean $E_{\rmean}$; lower right: absolute error of the entrywise mean $E_{\entmean}$.}
    	\label{fig:basicus3lyme}
    \end{figure}
    
    Note that in Figures 1, 2, 3, 4, and 5, the optimal regularization parameter $\alpha$ is greater than zero for sufficiently large observation proportion $\omega$.  Furthermore, in Figures 1, 2, 3, 4, and 5, the $\ell_1$-NNM recovered solution is exact for sufficiently large $\omega$, and the $\ell_1$-NNM recovery for the observations sampled via the structured strategy is more accurate than the NNM recovery for the observations sampled via the uniform strategy for larger proportion $\omega$.  Finally, often the inference recoveries are exact for smaller $\omega$ than is necessary for exact matrix recovery, as in Figure 1, 2, and 3.
    
    In Figures \ref{fig:new_basic1}, \ref{fig:new_basic2}, and \ref{fig:new_basic3}, we plot experimentally collected matrix and inference recovery errors on synthetic matrices; the figures differ by the choice of zero sampling probability $p_0$.  In these figures, we compare $\ell_1$-NNM and NNM recovery for matrices which have been sampled via the structured sampling strategy.  We generate a $30 \times 30$ matrix with rank $5$ as described in Subsection I-B.  We average the matrix recovery and inference recovery errors over 10 trials (each trial consists of a sample of observed entries) and plot the mean and standard deviation of these errors.  Errors are plotted versus the probability of sampling non-zero entries, $p_1$. We additionally record the optimal regularization parameter $\alpha$ which resulted in the smallest matrix recovery for the given non-zero structured sampling probability $p_1$ error in the plots in the upper left of each figure.   
    
    \begin{figure}[h]
    	\centering
    	\includegraphics[width=\linewidth]{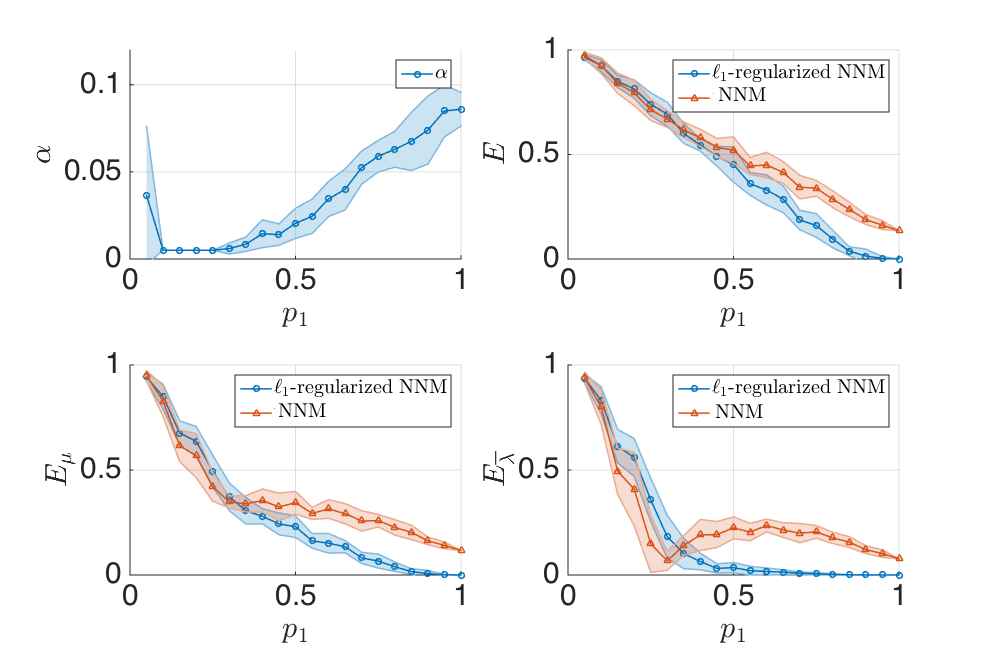}
    	\caption{Recovery errors for NNM and $\ell_1$-NNM on synthetic matrices sampled via structured sampling with $p_0 = 0$.  Upper left: optimal regularization parameter $\alpha$ for various non-zero sampling probability $p_1$; upper right: normalized matrix recovery errors $E$; lower left: normalized error of the row mean $E_{\rmean}$; lower right: absolute error of the entrywise mean $E_{\entmean}$. }
    	\label{fig:new_basic1}
    \end{figure}
    
    \begin{figure}[h]
    	\centering
    	\includegraphics[width=\linewidth]{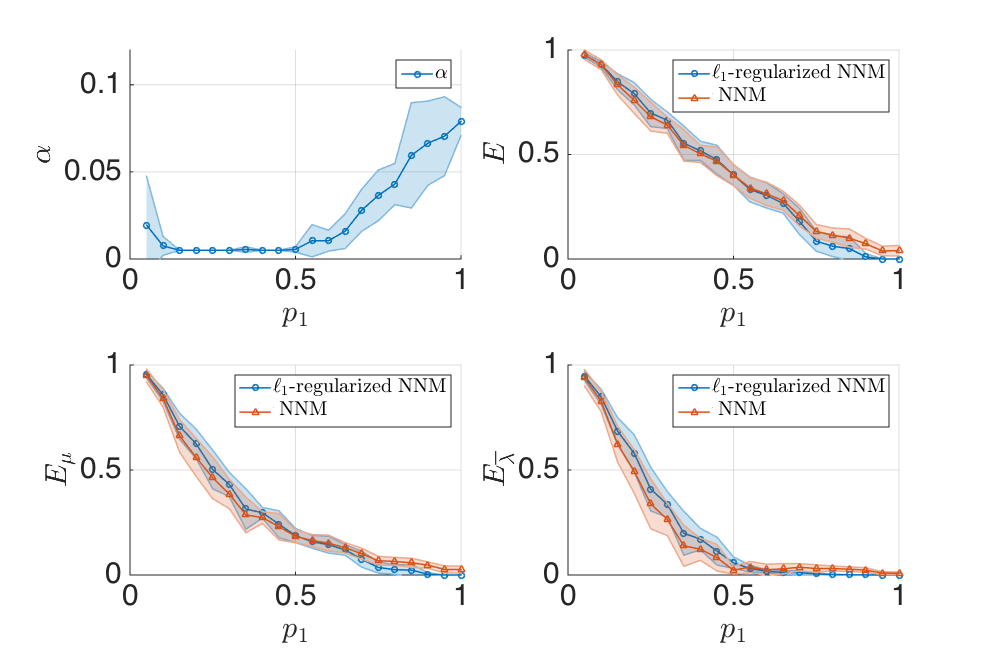}
    	\caption{Recovery errors for NNM and $\ell_1$-NNM on synthetic matrices sampled via structured sampling with $p_0 = 0.2$.  Upper left: optimal regularization parameter $\alpha$ for various non-zero sampling probability $p_1$; upper right: normalized matrix recovery errors $E$; lower left: normalized error of the row mean $E_{\rmean}$; lower right: absolute error of the entrywise mean $E_{\entmean}$. }
    	\label{fig:new_basic2}
    \end{figure}
    
    \begin{figure}[h]
    	\centering
    	\includegraphics[width=\linewidth]{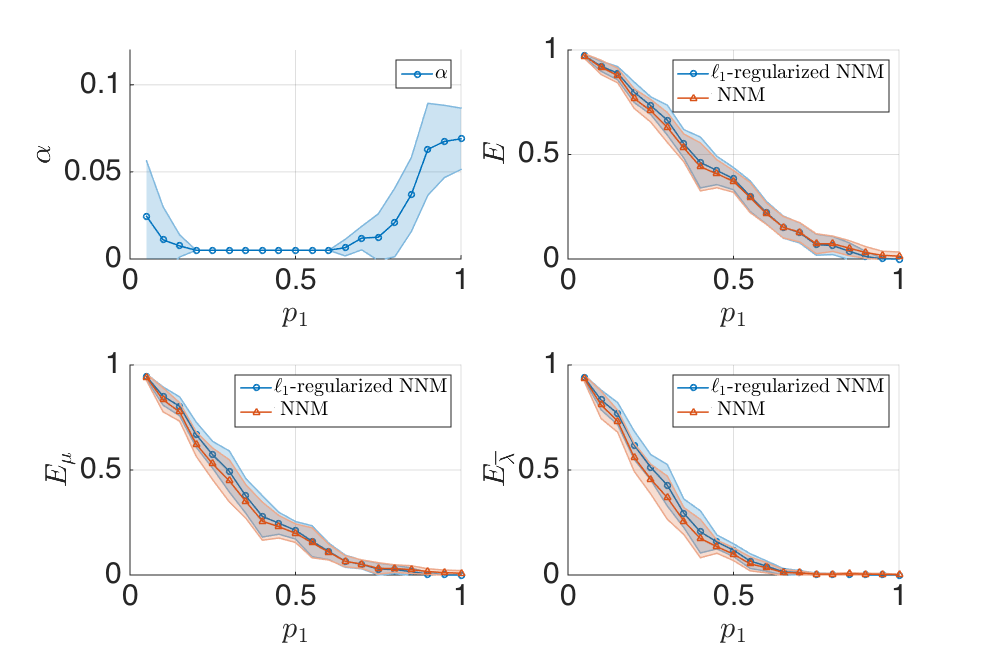}
    	\caption{Recovery errors for NNM and $\ell_1$-NNM on synthetic matrices sampled via structured sampling with $p_0 = 0.4$.  Upper left: optimal regularization parameter $\alpha$ for various non-zero sampling probability $p_1$; upper right: normalized matrix recovery errors $E$; lower left: normalized error of the row mean $E_{\rmean}$; lower right: absolute error of the entrywise mean $E_{\entmean}$. }
    	\label{fig:new_basic3}
    \end{figure}

  \section{Theoretical Results}\label{sec:theory}
	  Given that the matrix recovery error has been studied closely in the literature \cite{matrixcomplete,plan}, we aim to bound the inference recovery error by a function of the matrix recovery error. We establish bounds on the recovery error for the entrywise mean and row mean.
	  
	  The first result bounds the recovery error of the entrywise mean $\entmean$ and the row mean $\rmean$ by a scalar multiple of the matrix recovery error.  Recall that $\|\vec{A}\|_q$ denotes the standard $\ell_q$ vector-norm of the vectorization of the matrix $\vec{A}$.
	  
	  \begin{theorem}\label{thm:frobMean}
	  	Let $\entmean$ and $\rmean$ be the entrywise and row mean operators respectively. Then
	  	\begin{equation*}
	  	\left|\entmean(\original) - \entmean(\recovered)\right| \leq (mn)^{-\tfrac{1}{q}}\|\original - \recovered\|_q
	  	\end{equation*}
	  	and 
	  	\begin{equation*}
	  	\|\rmean(\original) - \rmean(\recovered)\|_q \leq \left(\frac{n^{q-1}}{m}\right)^{\frac{1}{q}}\|\original - \recovered\|_q
	  	\end{equation*}
	  	for all $\original,\recovered \in \Re^{m \times n}$ and $1 \leq q \le \infty$.
	  \end{theorem}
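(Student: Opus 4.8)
The plan is to treat both bounds as consequences of two elementary facts about vector norms applied to the entrywise difference $\vec{D} \defby \original - \recovered$: the triangle inequality, which lets me replace a signed sum by the $\ell_1$-norm, and the reverse norm comparison $\|\vec{w}\|_1 \le N^{1 - 1/q}\|\vec{w}\|_q$ for any $\vec{w} \in \Re^N$ (itself just H\"older's inequality applied to $\langle |\vec{w}|, \mathbf{1}\rangle$ with the conjugate exponent $q' = q/(q-1)$). Both $\entmean$ and $\rmean$ are linear in their matrix argument, so $\entmean(\original) - \entmean(\recovered) = \entmean(\vec{D})$ and $\rmean(\original) - \rmean(\recovered) = \rmean(\vec{D})$, which reduces each claim to a single norm estimate on $\vec{D}$.

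For the entrywise mean I would start from $|\entmean(\vec{D})| = (mn)^{-1}\,\bigl|\sum_{i,j} D_{ij}\bigr|$, bound the sum by $\sum_{i,j}|D_{ij}| = \|\vec{D}\|_1$ via the triangle inequality, and then apply the reverse comparison with $N = mn$ to obtain $\|\vec{D}\|_1 \le (mn)^{1 - 1/q}\|\vec{D}\|_q$. Multiplying through by $(mn)^{-1}$ collapses the exponent to $(mn)^{-1/q}$, which is exactly the claimed constant.

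For the row mean, $\rmean(\vec{D})$ is the length-$n$ vector whose $j$th entry is $m^{-1}\sum_{i} D_{ij}$. Here I would first pass from the $\ell_q$-norm to the $\ell_1$-norm of this length-$n$ vector using the monotonicity $\|\vec{v}\|_q \le \|\vec{v}\|_1$, then apply the triangle inequality entrywise to reach $m^{-1}\|\vec{D}\|_1$, and finally invoke the reverse comparison on the full vectorization ($N = mn$ again). The resulting constant $m^{-1}(mn)^{1 - 1/q} = (n^{q-1}/m)^{1/q}$ matches the statement. The one place to be careful --- and the main, if modest, obstacle --- is that the advertised constant is deliberately not the sharpest possible: bounding the column averages directly would yield the tighter $m^{-1/q}\|\vec{D}\|_q$, so I must route through the $\ell_1$-norm of the row-mean vector precisely in order to land on the factor $n^{(q-1)/q}$ as stated. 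I would also dispatch the endpoint $q = \infty$ by reading every exponent as a limit (so that $(mn)^{-1/q} \to 1$ and $(n^{q-1}/m)^{1/q} \to n$) and verifying that the same chain of inequalities remains valid with $\|\cdot\|_\infty$ in place of $\|\cdot\|_q$.
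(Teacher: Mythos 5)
Your proposal is correct and follows essentially the same route as the paper's proof: linearity reduces both claims to norm bounds on a single matrix, the entrywise mean is handled by the triangle inequality followed by H\"older's comparison $\|\cdot\|_1 \le (mn)^{1-1/q}\|\cdot\|_q$, and the row mean is bounded by passing from $\ell_q$ to $\ell_1$ of the row-mean vector, then the triangle inequality, then H\"older on the full vectorization --- exactly the paper's chain of inequalities. Your added observations (that the constant is not sharp, and the explicit limit reading at $q=\infty$) are correct but do not change the argument.
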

	  
	  \begin{proof}
	  	First, note that $\entmean$ and $\rmean$ are linear operators, so it suffices to show that $|\entmean(\vec{A})| \le (mn)^{-1/q}\|\vec{A}\|_q$ and $\|\mu(\vec{A})\|_q \le (n^{q-1}/m)^{1/q}\|\vec{A}\|_q$ for $\vec{A} \in \mathbb{R}^{m\times n}$.  Next, note that $|\entmean(\vec{A})| \le \|\vec{A}\|_1/mn$.

	  	Applying H{\"o}lder's inequality, we have 
	  	$$|\entmean(\vec{A})| \le \frac{1}{mn}\|\vec{A}\|_1 \le (mn)^{-\frac{1}{q}}\|\vec{A}\|_q$$ where $1/q$ assumes the value $0$ if $q = \infty$.
	  	
	  	Next, note that 
	  	\begin{align*}
	  	\|\rmean(\vec{A})\|_q^q &= \frac{1}{m^q}\sum_{j=1}^n\left|\sum_{i=1}^m A_{ij}\right|^q \le \frac{1}{m^q}\left(\sum_{j=1}^n\sum_{i=1}^m |A_{ij}|\right)^q 
	  	\\&= \frac{\|\vec{A}\|_1^q}{m^q} \le \frac{n^{q-1}\|\vec{A}\|_q^q}{m}
	  	\end{align*}
	  	where the last inequality follows from H{\"o}lder's inequality.
	  \end{proof}
	  
	  In Figure 7 we explore the bounds given in Theorem III.1. We generate $20$ random scalar matrices of size $16 \times 80$ as described in Subection I-B. For each matrix, we collect $20$ uniform samples of the entries using the sampling probability $p$, then calculate the averages of the entrywise mean recovery error, the row mean recovery error, and the derived upper bounds based on the matrix recovery error for each sample. We perform this process for $p = 0, 0.01, ..., 1$. 
	  
	  \begin{figure}[ht]
	  	\centering
	  	\includegraphics[width=0.45\linewidth]{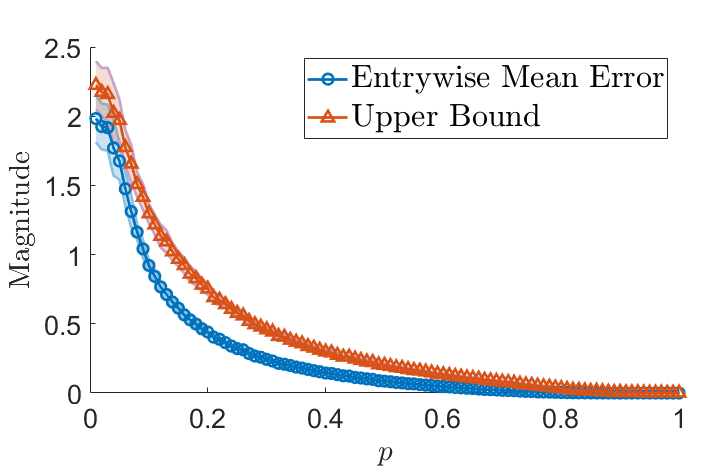}
	  	\includegraphics[width=0.45\linewidth]{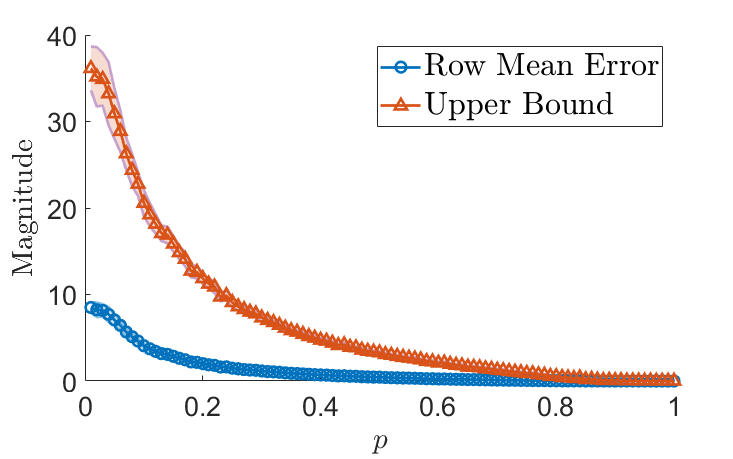}
	  	\caption{The averages of the 400 sampled inference recovery errors and the derived upper bounds for uniform observation sampling probabilities from $0$ to $1$, with step size $0.01$. Left: entrywise mean error; right: row mean error.}
	  	\label{fig:entMean}
	  \end{figure}
	  
	  Finally, we present a simple analytic bound for NNM matrix recovery error.  Note that this bound illustrates that the inference recovery errors may still be small even if the matrix recovery is not exact.
	  
	  \begin{theorem}\label{thm:analyticBound}
	  	Let $\original \in \Re^{m \times n}$, $\Omega$, and $\recovered$ be computed via NNM as described in Subsection I-A.
	  	Let $r = \rank(\original)$ denote the rank of $\original$, and denote the singular values of $\original$ by
	  	$\sigma_1 \geq \sigma_2 \geq \cdots \geq \sigma_r$
	  	in decreasing order. Then
	  	\begin{equation}
	  	\|\original - \recovered\|_F \leq 2\sqrt{r^2\sigma_1^2 - \|\original_\Omega\|_F^2}.
	  	\end{equation}
	  \end{theorem}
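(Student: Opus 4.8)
The plan is to exploit two facts about the NNM solution: that $\recovered$ agrees with $\original$ on the observed set $\Omega$, and that $\recovered$ has nuclear norm no larger than $\original$. The first fact localizes the error. Since $M_{ij} = \widetilde{M}_{ij}$ for all $(i,j) \in \Omega$ by the NNM constraint, the difference $\original - \recovered$ vanishes on $\Omega$ and is supported entirely on $\Omega^C$. Hence
\begin{equation*}
\fnorm{\original - \recovered} = \fnorm{(\original - \recovered)_{\Omega^C}} \le \fnorm{\original_{\Omega^C}} + \fnorm{\recovered_{\Omega^C}},
\end{equation*}
by the triangle inequality, so it suffices to bound each of the two complementary-support terms by $\sqrt{r^2\sigma_1^2 - \fnormsq{\original_\Omega}}$.

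For the first term, I would use the standard comparison $\fnorm{\vec{A}} \le \|\vec{A}\|_*$ (valid since $\sqrt{\sum_i \sigma_i^2} \le \sum_i \sigma_i$) together with $\|\original\|_* = \sum_{i=1}^r \sigma_i \le r\sigma_1$ to obtain $\fnormsq{\original} \le r^2\sigma_1^2$. Because the Frobenius norm splits orthogonally over $\Omega$ and $\Omega^C$, we have $\fnormsq{\original} = \fnormsq{\original_\Omega} + \fnormsq{\original_{\Omega^C}}$, and rearranging gives $\fnormsq{\original_{\Omega^C}} \le r^2\sigma_1^2 - \fnormsq{\original_\Omega}$.

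The second term is where the optimality of NNM enters. Since $\original$ is feasible for the NNM program and $\recovered$ is a minimizer, $\|\recovered\|_* \le \|\original\|_* \le r\sigma_1$, and the same Frobenius--nuclear comparison yields $\fnormsq{\recovered} \le r^2\sigma_1^2$. Crucially, $\recovered_\Omega = \original_\Omega$, so the identical orthogonal decomposition $\fnormsq{\recovered} = \fnormsq{\original_\Omega} + \fnormsq{\recovered_{\Omega^C}}$ gives $\fnormsq{\recovered_{\Omega^C}} \le r^2\sigma_1^2 - \fnormsq{\original_\Omega}$, the same bound as before. Substituting both estimates into the triangle inequality above produces the claimed factor of $2$.

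I do not expect a serious obstacle here, as the argument is a short chain of inequalities. The only point requiring care is recognizing why the $\fnormsq{\original_\Omega}$ term can be subtracted from both bounds: it arises because $\original$ and $\recovered$ share the same observed entries, so the quantity $\fnormsq{\original_\Omega}$ is common to both Frobenius-norm decompositions and can be peeled off after the nuclear-norm bound $\fnormsq{\cdot} \le r^2\sigma_1^2$ is applied. This is what sharpens the naive bound $2r\sigma_1$ to the stated $2\sqrt{r^2\sigma_1^2 - \fnormsq{\original_\Omega}}$.
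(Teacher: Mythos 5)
Your proof is correct, and it takes a genuinely different route from the paper's. The paper proceeds via the parallelogram identity, writing $\|\original - \recovered\|_F^2 = 2\left(\|\original\|_F^2 + \|\recovered\|_F^2\right) - \|\original + \recovered\|_F^2$, bounding both squared Frobenius norms by $r^2\sigma_1^2$ (using $\|\recovered\|_* \le \|\original\|_* \le r\sigma_1$ for the second), and then lower-bounding the cross term by $\|\original + \recovered\|_F^2 \ge 4\|\original_\Omega\|_F^2$, since the two matrices agree on $\Omega$. You instead localize the error to $\Omega^C$, apply the triangle inequality, and peel off $\|\original_\Omega\|_F^2$ from each term via the orthogonal (disjoint-support) decomposition of the Frobenius norm. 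The key ingredients are identical in both arguments---optimality giving $\|\recovered\|_* \le \|\original\|_*$, the comparison $\|\cdot\|_F \le \|\cdot\|_*$, the bound $\|\original\|_* \le r\sigma_1$, and agreement of the two matrices on $\Omega$---but the mechanics differ, and your version makes the role of the shared observed entries more transparent: the subtracted term $\|\original_\Omega\|_F^2$ visibly comes from Pythagoras rather than from a lower bound on $\|\original + \recovered\|_F$. Your intermediate estimate is also never worse: writing $a = \|\original_{\Omega^C}\|_F$ and $b = \|\recovered_{\Omega^C}\|_F$, your bound is $a+b$ while the paper's reduces (after expanding over supports) to $\sqrt{2(a^2+b^2)} \ge a+b$; both collapse to the same stated constant once the common bound $a^2, b^2 \le r^2\sigma_1^2 - \|\original_\Omega\|_F^2$ is inserted. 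The paper's version buys brevity---three displayed lines---while yours is marginally sharper before the final relaxation and avoids invoking the parallelogram law.
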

	  
	  \begin{proof}
	  	Applying the Parallelogram Identity, we have
	  	$$
	  	\|\original - \recovered\|_F^2 = 2\left(\|\original\|_F^2 + \|\recovered\|_F^2\right) - \|\original + \recovered\|_F^2.
	  	$$
	  	We bound each term of the right-hand side, beginning with the $\|\original\|_F^2$ term. By H{\"o}lder's Inequality, we have
	  	$$
	  	\|\original\|_F^2 = \|(\sigma_1,\sigma_2,\ldots,\sigma_r)\|_2^2 \leq r^2\sigma_1^2.
	  	$$
	  	Next, we bound the $\|\recovered\|_F^2$ term above. Since $\original$ is feasible for the nuclear norm minimization problem, note that $\|\recovered\|_* \leq \|\original\|_*$. Therefore, through repeated use of H{\"o}lder's Inequality, we calculate that
	  	$$
	  	\|\recovered\|_F^2 \leq \|\recovered\|_*^2 \leq \|\original\|_*^2 = \|\left(\sigma_1,\sigma_2,\ldots,\sigma_r\right)\|_1^2 \le r^2\sigma_1^2.
	  	$$
	  	Finally, note that $\|\original + \recovered\|_F^2 \ge 4 \|\original_\Omega\|_F^2$.
	  \end{proof}
	  
	  Note that this bound proves exact recovery when all entries of the matrix are observed and all singular values of the matrix are equal, but is likely not tight for many situations when exact recovery can be guaranteed by e.g., \cite{matrixcomplete,plan}.
	  
  \section{Conclusion}\label{sec:conclude}
    In this work, we explored how error introduced by data completion affects recovery of statistical inferences.  Our numerical experiments demonstrate that simple inferences such as the entrywise mean or the row mean can be recovered accurately even when the matrix is not recovered exactly.  We prove bounds on the inference recovery error in terms of the matrix recovery error for the entrywise mean and the row mean. Additionally, we prove an analytical bound on the matrix recovery error which applies even when the matrix cannot be recovered exactly. 
    
    Future directions include exploring more common statistical inferences, such as support vector machine models.  Additionally, we hope to develop a better analytic bound on the matrix recovery error which generalizes the exact recovery results in the literature.  Furthermore, we will explore theory for exact recovery via $\ell_1$-NNM for matrices whose observations are sampled via the structured sampling strategy.

  \section{Acknowledgements}\label{sec:ack}
    DN, JH, and DM are grateful to and were partially supported by NSF CAREER DMS \#1348721 and NSF BIGDATA DMS \#1740325.  This work is based upon work completed at the UCLA CAM REU during Summer 2018 which was funded by NSF DMS \#1659676.  The authors would like to thank CEO Lorraine Johnson, LymeDisease.org, and the patients who participated in the MyLymeData survey. Additionally, they thank Dr. Anna Ma for her assistance with this data, and Prof. Andrea Bertozzi and the UCLA Applied and Computational Math REU program for their support.

  \bibliography{bib}
  \bibliographystyle{plain}

\end{document}